\theoremstyle{plain} 
\theoremstyle{definition} 
\newtheorem{thm}{Theorem}[section]
\newtheorem{lem}[thm]{Lemma}
\theoremstyle{definition}
\theoremstyle{remark}
\newcommand{\be}{\begin{equation}}
	\newcommand{\ee}{\end{equation}}
\newcommand{\bea}{\begin{eqnarray}}
	\newcommand{\eea}{\end{eqnarray}}
\newcommand{\ben}{\begin{eqnarray*}}
	\newcommand{\een}{\end{eqnarray*}}
\newcommand{\bt}{\begin{split}}
	\newcommand{\et}{\end{split}}
\newcommand{\bet}{\begin{equation}}
	\newcommand{\mc}{\mathbb{C}}
	\newcommand{\ra}{\rightarrow}
\begin{document}
		\title[]{Characterization of negative line bundles whose Grauert blow-down are quadratic transforms}
		
		\author[F. Deng]{Fusheng Deng}
		\address{Fusheng Deng: \ School of Mathematical Sciences, University of Chinese Academy of Sciences\\ Beijing 100049, P. R. China}
		\email{fshdeng@ucas.ac.cn}
		%
	     \author[Y. Li]{Yinji Li}
	     \address{Yinji Li:  Institute of Mathematics\\Academy of Mathematics and Systems Science\\Chinese Academy of
		Sciences\\Beijing\\100190\\P. R. China}
	     \email{liyinji@amss.ac.cn}
           \author[Q. Liu]{Qunhuan Liu}
           \address{Qunhuan Liu: \ School of Mathematical Sciences, University of Chinese Academy of Sciences\\ Beijing 100049, P. R. China}
           \email{liuqunhuan23@mails.ucas.edu.cn}
		\author[X. Zhou]{Xiangyu Zhou}
		\address{Xiangyu Zhou: Institute of Mathematics\\Academy of Mathematics and Systems Science\\and Hua Loo-Keng Key
			Laboratory of Mathematics\\Chinese Academy of
			Sciences\\Beijing\\100190\\P. R. China}
		\address{School of
			Mathematical Sciences, University of Chinese Academy of Sciences,
			Beijing 100049, P. R. China}
		\email{xyzhou@math.ac.cn}
		
		\begin{abstract}
We show that the Grauert blow-down of a holomorphic negative line bundle $L$ over a compact complex space is a quadratic transform if and only if $k_0L^*$ is very ample and $(k_0+1)L^*$ is globally generated,
where $k_0$ is the initial order of $L^*$, namely, the minimal integer such that $k_0L^*$ has nontrivial holomorphic section.
		\end{abstract}
		
		\thanks{}

		\maketitle

\section{Introductions}
In the present article, we assume all complex spaces $X$ are reduced and connected.

Let $L$ be a holomorphic line bundle on $X$.
$L$ is called negative (in the sense of Grauert) if the zero section $X$ is exceptional in $L$.
Namely, there is a complex space $Z$ and a proper holomorphic map $f:L\ra Z$ such that $X$ is mapped to a single point $z_0$, 
$f:L\backslash X\ra Z\backslash\{z_0\}$ is biholomorphic, and for every open subset $U\subseteq Z$ and every holomorphic function $h$ on $V=f^{-1}(U)$, 
there is a holomorphic function $g$ on $U$ such that $h=g\circ f$. 
By a remarkable result of Grauert \cite{Gr62}, $L$ is negative if and only if $L^*$ is ample.
Such a pointed complex space $(Z,z_0)$ is unique up to biholomorphic equivalence.
We call the map $f:L\ra Z$ the \emph{Grauert blow-down} of $L$.

In this article, we study the natural question: \emph{when is $f:L\ra Z$ obtained as the quadratic transform of $Z$ with center $z_0$}?
Here we recall the notion of quadratic transform.
Let $\mathfrak{m}_{z_0}$ be the maximal ideal of the local ring $\mathcal O_{Z,z_0}$ of holomorphic function germs on $Z$ at $z_0$,
and let $f_1, \cdots, f_N$ form a set of generators of $\mathfrak{m}_{z_0}$. 
Since the construction is local in nature, we may assume $f_1,\cdots, f_N\in \mathcal O(Z)$.
Then the quadratic transform of $Z$ with center $z_0$ is defined to be the closure $Bl_{z_0}(Z)$ of the set
$$\{(z,[f_1(z):\cdots:f_N(z)])\in Z\times\mathbb P^{N-1}; z\in Z, z\neq z_0\}$$
in $Z\times\mathbb P^{N-1}$, together with the map $\pi:Bl_{z_0}(Z)\ra Z$ given by the restriction of the natural projection from $Z\times\mathbb P^{N-1}$ to $Z$.
One can show that the definition does not depend on the choice of generators, up to isomorphism in the obvious sense.
Sometimes one also calls $\pi:Bl_{z_0}(Z)\ra Z$ the blow up of $Z$ along the ideal $\mathfrak{m}_{z_0}$ (this is a special case of blow up along general coherent ideal sheaves).

We also recall the notion of the \emph{initial order} of a line bundle, which is introduced in \cite{DLLZ24}:
for holomorphic line bundle $L$ on a compact complex space $X$,
the initial order of $L$ is the minimal positive integer $k_0$ such that $k_0L$ has non-zero global section.
Our main theorem is as follows.
\begin{thm}\label{thm:main}
Let $L$ be a negative line bundle over a compact complex space and 
$f:(L,X)\ra(Z,z_0)$ be the Grauert blow-down.
Then $f:L\ra Z$ is the quadratic transform of $Z$ with center $z_0$ if and only if $k_0L^*$ is very ample and $(k_0+1)L^*$ is globally generated, where $k_0$ is the initial order of $L^*$.
\end{thm} 

Recall that the theta characteristic of a compact Riemannian surface $X$ is a line bundle $\xi$ on $X$ such that $2\xi$ is isomorphic to the canonical line bundle of $X$.
It is known that for generic compact Riemannian surfaces of genus $\geq 3$, there exists theta characteristics satisfying the condition in Theorem \ref{thm:main} but do not have nontrivial section (see \S \ref{sec:theta char} for details),
so they provide a family of counterexamples to a theorem of Morrow and Rossi \cite[Theorem 5.6]{MR79} 
which states that the Grauert blow-down of a negative line bundle over a compact complex manifold is a quadratic transform if and only if its dual bundle $L^*$ is very ample
(indeed this statement is a direct corollary of the quoted theorem).

\emph{Remark.}
We do not know if the second condition in Theorem \ref{thm:main} is redundant,
namely, if the very ampleness of $k_0L$ automatically implies that $(k_0+1)L$ is globally generated.

 \subsection*{Acknowledgements}
  This research is supported by National Key R\&D Program of
  China (No. 2021YFA1003100), NSFC grants (No. 12471079), and the Fundamental Research Funds for the Central Universities.

\section{Proof of the main theorem}
\subsection{Preliminaries}
In this subsection, we recall some well known basic facts about holomorphic line bundles that are related our discussion.

Let $p:L\ra X$ be a holomorphic line bundle over a  compact complex space $X$.
Given a holomorphic section $f$ of the dual bundle $L^*$ of $L$ and $x\in X$, $f_x\in L^*_x$ is a linear function on $L_x$.
As $x$ varies over $X$, $f$ gives a holomorphic function on the total space of $L$ which is linear along fibers.
In general, for any $k\geq 1$, a section $f\in H^0(X,kL^*)$ can be viewed as a holomorphic function on $L$,
whose restriction on each fiber is a homogenous polynomial of degree $k$.
So we can view 
$$R(X,L^*):=\bigoplus_{k=0}^{\infty}H^0(X,kL^*)$$
as a subalgebra of the $\mc$-algebra of holomorphic functions on $L$.

On the other hand, we consider the natural circular group $S^1$ action on $L$ which is given by scalar product along fibers.
Then any holomorphic function on a $S^1$-invariant neighborhood $U$ of the zero section $X$ in $L$ can be represented as a series 
whose components are given by holomorphic sections of $kL^*$, $(k\geq 0)$.
In fact, viewed as a linear subspace of $\mathcal O(U)$, $H^0(X,kL^*)$ can be characterized as
$$H^0(X,kL^*)=\{f\in\mathcal O(U); f(\alpha z)=\alpha^kf(z)\ \text{for}\ z\in U, \alpha\in S^1\}.$$
We summarize this as the following lemma.

\begin{lem}\label{lem:expansion}
Suppose $X$ is a  compact complex space, $L$ is a holomorphic line bundle on $X$, and $U$ is a connected $S^1$-invariant neighborhood of the zero section $X$.
Then every holomorphic function $h$ on $U$ can be written as a series
\begin{align*}
h=\sum_{k=0}^{\infty}h_k,\ h_k\in H^0(X,kL^*)
\end{align*}
which converges uniformly on compact subsets of $U$.
\end{lem}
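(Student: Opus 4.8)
The plan is to realize the decomposition $h = \sum_k h_k$ as the Fourier expansion of $h$ with respect to the $S^1$-action, and then to identify each Fourier mode with a section of $kL^*$. First I would fix the connected $S^1$-invariant open set $U \subseteq L$ and, for a holomorphic function $h \in \mathcal O(U)$, define for each integer $k \ge 0$
\begin{align*}
h_k(z) := \int_{S^1} \alpha^{-k}\, h(\alpha z)\, d\alpha,
\end{align*}
where $d\alpha$ is the normalized Haar measure on $S^1$; the integrand is well defined because $U$ is $S^1$-invariant. Differentiating under the integral sign (the integrand depends holomorphically on $z$ and continuously on $\alpha$ on compact subsets) shows $h_k \in \mathcal O(U)$, and a change of variables $\alpha \mapsto \beta\alpha$ gives the homogeneity relation $h_k(\beta z) = \beta^k h_k(z)$ for all $\beta \in S^1$. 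By the characterization of $H^0(X,kL^*)$ recalled just before the lemma — a holomorphic function on $U$ that is $S^1$-homogeneous of weight $k$ extends through the zero section and restricts to a degree-$k$ homogeneous polynomial on each fiber, hence is a section of $kL^*$ — we get $h_k \in H^0(X,kL^*)$. (One should note that for $k < 0$ the same computation forces $h_{-k}$ to be a holomorphic function on $U$ that vanishes to infinite order along $X$ in the fiber directions and is homogeneous of negative weight, hence is identically zero on the fibers meeting $X$ and therefore zero on all of $U$ by connectedness; this is why only $k \ge 0$ appears.)

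Next I would establish convergence. Fix a point $x \in X$ and a small $S^1$-invariant polydisc-type neighborhood $V$ of the fiber over a compact neighborhood of $x$, chosen so that $\{|\zeta| \le r\} \times (\text{base nbhd}) \subseteq U$ for some $r > 0$ in a local trivialization $L|_{\text{base nbhd}} \cong (\text{base nbhd}) \times \mathbb C$ with fiber coordinate $\zeta$. On such a set $h$ has an ordinary power-series expansion in $\zeta$ with coefficients holomorphic in the base, and by uniqueness of Fourier coefficients this matches the restriction of $\sum_k h_k$; the classical estimate for power series then gives uniform convergence of $\sum_k h_k$ on the compact subset $\{|\zeta| \le \rho\} \times (\text{base nbhd})$ for any $\rho < r$. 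Since $X$ is compact, finitely many such charts cover a full $S^1$-invariant neighborhood of $X$, giving uniform convergence on compact subsets near $X$; to propagate this to all of $U$ I would use that $h$ is a genuine holomorphic function on the connected open set $U$, so the Fourier series $\sum_k h_k$ (which agrees with $h$ near $X$) and $h$ agree on all of $U$ by analytic continuation, and the estimate $\|h_k\|_{K} \le \sup_{\alpha \in S^1,\, z \in K} |h(\alpha z)| = \|h\|_{S^1 \cdot K}$ for any compact $K \subseteq U$, combined with a Cauchy-type decay estimate obtained by integrating $\alpha^{-k} h(\alpha z)$ over a slightly larger radius, yields uniform convergence on every compact subset of $U$.

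The main obstacle I anticipate is not the Fourier-analytic bookkeeping but the passage from "$S^1$-homogeneous holomorphic function of weight $k$ on $U$" to "global holomorphic section of $kL^*$ on the compact space $X$" — i.e. justifying that such a function automatically extends over the zero section and is fiberwise polynomial even when $X$ is a singular complex space. On a manifold this is immediate from the fiberwise power series; on a reduced complex space one must argue locally, embedding a Stein neighborhood of a fiber into $\mathbb C^n \times \mathbb C$ equivariantly for the fiber $S^1$-action, expanding in the last coordinate, and checking that weight-$k$ homogeneity kills all but the degree-$k$ term and forces holomorphic dependence on the base parameters; this is exactly the content attributed to the displayed characterization of $H^0(X,kL^*)$ preceding the lemma, so in the write-up I would either invoke that characterization as given or spell out this local normal-form argument. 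Everything else — linearity, the homogeneity identity, and uniform convergence on compacta — is routine once this identification is in hand.
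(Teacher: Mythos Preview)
Your proposal is correct and in fact is precisely the representation-theoretic argument that the paper alludes to in its first sentence (``This is a consequence from basic representation theory of compact Lie groups'') before opting for a different, more hands-on route. The paper's proof never writes down the averaging integral; instead it chooses a trivializing cover $\{U_i\}$, expands $h$ on each $U_i\times\mathbb C$ as an ordinary power series $\sum_k h_{i,k}(x_i)t_i^k$ in the fiber coordinate, and then checks directly from the transition functions $\varphi_{ij}$ that the local coefficients satisfy $h_{i,k}=\varphi_{ij}^{-k}h_{j,k}$, i.e.\ glue to a global section of $kL^*$. Your approach defines $h_k$ globally from the outset and then invokes the characterization $H^0(X,kL^*)=\{f\in\mathcal O(U):f(\alpha z)=\alpha^k f(z)\}$ stated just before the lemma, which is cleaner conceptually and makes the $S^1$-invariance hypothesis do visible work; the paper's approach is more elementary in that it avoids that characterization entirely and reduces everything to one-variable Taylor series plus a cocycle check. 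Your Cauchy-type decay estimate (via $h_k(z)=r^{-k}h_k(rz)$ for $r>1$ with $rK\subset U$, using that $h_k$ is fiberwise polynomial once identified with a section of $kL^*$) is arguably more robust than the paper's bare assertion of local uniform convergence of the power series, especially if $U$ is not fiberwise a disc. The one place to tighten your write-up is the analytic-continuation step for $h=\sum_k h_k$ on all of $U$: since $X$ need only be reduced and connected, not irreducible, you should note that the neighborhood of $X$ on which equality is first established meets every irreducible component of $L$, so the identity principle applies componentwise.
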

\begin{proof}
This is a consequence from basic representation theory of compact Lie groups, 
but we give a direct argument for it here.
Let $\{U_i\}_{i\in I}$ be an open covering of $X$ such that $L|_{U_i}\cong U_i\times \mc$.
In each $U_i\times\mc$, 
we can develop $h$ into a locally uniformly convergence series 
\begin{align*}
h(x_i,t_i)=h_{i,0}(x_i)+h_{i,1}(x_i)t_i+h_{i,2}(x_i)t_i^2+\cdots,\ (x_i,t_i)\in U_i\times\mc,
\end{align*}
where $h_{i,k}$, $k=0,1,2,\cdots$ are holomorphic functions on $U_i$.

Let $\{\varphi_{ij}\in\mathcal{O}^*(U_i\cap U_j)\}$ be the transition functions of the line bundle $L$,
we can infer that on $U_i\cap U_j$, $h_{i,k}=\frac{1}{\varphi_{ij}^k}h_{j,k}$.
This implies $\{h_{i,k}\in\mathcal{O}(U_i)\}_{i\in I}$ corresponds to an element in $H^0(X,kL^*)$.
\end{proof}

\subsection{Key lemmas for the proof of the main theorem}
We establish two lemmas that are used in the proof of Theorem \ref{thm:main}.
There results have their independent interest.
One key observation is that, if $L$ is negative, 
then generators of $R(X,L^*)$ (viewed as holomorphic functions on $L$) gives the Grauert blow-down of $L$
(it is well known that $R(X,L^*)$ is finitely generated as a $\mc$-algebra when $L^*$ is ample).

\begin{lem}\label{lem:generators give blow-down}
Let $L$ be a negative line bundle over a compact complex space $X$, 
and $\{f_1,\cdots,f_N\}$ be a set of generators of $R(X,L^*)$ as a $\mc$-algebra, 
then 
$$f=(f_1,\cdots,f_N):L\ra Z:=f(L)(\subset\mc^N)$$ 
induces the Grauert blow-down of $L$.
Moreover, if $\{f_1^{\prime},\cdots,f_M^{\prime}\}$ is another set of generators of $R(X,L^*)$,
then $f^{\prime}:L\ra Z':=f'(L)(\subset\mc^M)$ is isomorphic to $f:L\ra Z$.
\end{lem}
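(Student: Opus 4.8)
The strategy is to realize the Grauert blow-down as the projective spectrum of the graded ring $R(X,L^*)$, using the characterization of $H^0(X,kL^*)$ as the $S^1$-weight-$k$ holomorphic functions on a neighborhood of the zero section (Lemma \ref{lem:expansion}). First I would recall Grauert's theorem: since $L$ is negative, $L^*$ is ample, so $R(X,L^*)$ is a finitely generated $\mathbb C$-algebra, and $k_0L^*$ separates points and tangents for $k_0$ large; in particular the generators $f_1,\dots,f_N$ — which we may take to be homogeneous of various weights $d_i\ge 1$ with respect to the $S^1$-action — have no common zero on $L\setminus X$ and all vanish on $X$. Thus $f=(f_1,\dots,f_N)$ maps $L$ to $\mathbb C^N$, sends the zero section $X$ to the origin $0$, and the origin is not in the image of $L\setminus X$; set $Z=f(L)$.

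The core of the argument is to verify the three defining properties of the Grauert blow-down for $f:L\to Z$, $z_0=0$. (i) \emph{Properness}: $f$ is proper. This follows because each $f_i$ is proper along fibers (a nonzero homogeneous polynomial of degree $d_i$ on $\mathbb C$) together with a compactness argument over $X$; alternatively, one embeds $L$ equivariantly and reads off properness from the completion. (ii) \emph{Biholomorphism off the exceptional set}: $f$ restricts to a biholomorphism $L\setminus X\to Z\setminus\{0\}$. Injectivity on $L\setminus X$ is exactly the separation of points by sections of $k_0L^*$ for suitable $k_0$, packaged into the generators; and since $L$ is normal (being a line bundle over a reduced space — here one uses that $X$ is reduced and that $L$ is locally a product, so $L$ is reduced, and one works with the normalization if needed) one gets that $f$ is a biholomorphism onto its image away from $X$ by Zariski's main theorem / the fact that a proper injective holomorphic map with discrete fibers onto a normal space is biholomorphic. (iii) \emph{The pushforward condition}: for $U\subseteq Z$ open and $h$ holomorphic on $V=f^{-1}(U)$, there is $g$ holomorphic on $U$ with $h=g\circ f$. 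This is where the $S^1$-expansion does the work: we may shrink to an $S^1$-invariant $V$, expand $h=\sum_k h_k$ with $h_k\in H^0(X,kL^*)$ by Lemma \ref{lem:expansion}, and since each $h_k$ is a weight-$k$ element of the ring generated by $f_1,\dots,f_N$, it is a polynomial in the $f_i$; reassembling, $h$ is the pullback of a holomorphic function on $U$. A limiting/estimate argument ensures the resulting series in the $f_i$ converges on $U$.

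For the moreover part, given a second generating set $\{f_1',\dots,f_M'\}$ with associated map $f':L\to Z'=f'(L)$, each $f_j'$ lies in the algebra generated by the $f_i$ and vice versa, so there are polynomial maps $\Phi:\mathbb C^N\to\mathbb C^M$ and $\Psi:\mathbb C^M\to\mathbb C^N$ with $f'=\Phi\circ f$ and $f=\Psi\circ f'$ on $L$. Restricting $\Phi,\Psi$ to $Z,Z'$ gives holomorphic maps that are mutually inverse on the dense open sets $Z\setminus\{0\}$, $Z'\setminus\{0\}$ (using that $f,f'$ are biholomorphisms there), hence mutually inverse everywhere by continuity; they carry $0$ to $0$ and intertwine $f$ with $f'$, giving the desired isomorphism.

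The main obstacle I anticipate is property (iii): one must be careful that the $S^1$-invariant neighborhood on which the expansion of $h$ is valid can be taken of the form $f^{-1}(U)$ for an open $U\ni 0$ in $Z$, and that expressing each homogeneous piece $h_k$ as a polynomial in the $f_i$ and summing over $k$ yields a series that actually converges to a holomorphic function downstairs rather than merely a formal object. Handling this cleanly requires either a direct estimate on the Taylor coefficients (using ampleness of $L^*$ to bound $\|h_k\|$ on fibers of controlled radius) or invoking the coherence of $f_*\mathcal O_L$ and the normality of $Z$; this is the step that deserves the most care, while properness and the identification off the zero section are comparatively standard.
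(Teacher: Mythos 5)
Your step (ii) contains the real gap, and it is precisely the subtlety this paper is built around. Separation of points of $X$ by sections of a single power $k_0L^*$ does \emph{not} give injectivity of $f$ on $L\setminus X$: a section of $kL^*$, viewed as a function on $L$, is homogeneous of degree $k$ along fibers, so sections of one fixed power cannot distinguish $t$ from $\zeta t$ with $\zeta^k=1$ in a single fiber $L_x$. To pin down the fiber coordinate you need nonvanishing sections of two consecutive (or coprime) powers at each point; this is why the paper's proof first establishes the ``moreover'' independence statement and then chooses generators containing a basis of $H^0(X,kL^*)$ (with $kL^*$ very ample) \emph{and} a basis of $H^0(X,(k+1)L^*)$ (globally generated), using the latter to kill the root-of-unity ambiguity. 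Your phrase ``packaged into the generators'' could in principle be repaired — the full ring $R(X,L^*)$ does separate points of $L\setminus X$, since it contains sections of both $kL^*$ and $(k+1)L^*$ for large $k$ — but as written the argument is wrong, and the distinction is not pedantic: that very ampleness of one power alone does not suffice is exactly the point of the main theorem and of the counterexample to Morrow--Rossi.

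A second problem in (ii): $L$ need not be normal ($X$ is only assumed reduced, and reduced does not imply normal), and the statement ``a proper injective holomorphic map with discrete fibers onto a normal space is biholomorphic'' would require normality of the \emph{target} $Z\setminus\{0\}$, which is not known a priori (compare the normalization of a cuspidal curve: bijective, proper, not biholomorphic). The paper avoids this entirely by proving directly that $\hat f=(f_1,\dots,f_{N_1})$ is a local biholomorphism onto its image off the zero section, using the very ampleness of $kL^*$ and the computation $\frac{\partial}{\partial t}\big(f_{1,k}(x)t^k\big)\neq 0$ for $t\neq 0$; injectivity plus this immersion property plus properness then give the biholomorphism $L\setminus X\to Z\setminus\{0\}$. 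Your step (iii) and the ``moreover'' part are essentially the paper's arguments (partial sums of the $S^1$-expansion, each a polynomial in the $f_i$, plus the Weierstrass convergence theorem on complex spaces; mutually inverse polynomial maps between $Z$ and $Z'$), so the part you flagged as the main obstacle is fine — the missing ideas are the two-consecutive-powers injectivity argument and a normality-free proof that $f$ is an immersion off $X$, both of which are enabled by proving the generator-independence statement \emph{first} and then working with a specially chosen generating set.
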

\begin{proof}
Firstly, we prove the second statement, that is there exists a commutative diagram
\[\begin{tikzcd}
	& {L} \\
	\\
	{Z} && {Z^{\prime}}
	\arrow["f"', from=1-2, to=3-1]
	\arrow["f^{\prime}", from=1-2, to=3-3]
	\arrow["\cong"', from=3-1, to=3-3].
\end{tikzcd}\]
Since $f_1,\cdots,f_N$ generate $R(X,L^*)$, 
there exists polynomials $p_j^{\prime}\in\mc[x_1,\cdots,x_N]$ such that 
$$f_j^{\prime}=p_j^{\prime}(f_1,\cdots,f_N),\ j=1,\cdots,M.$$
As a result, $p=(p_1,\cdots,p_M):\mc^N\ra \mc^M$ maps $Z$ onto $Z^{\prime}$ and $f^{\prime}=p^{\prime}\circ f $.
On the other hand, $\{f_j^{\prime}\}$ are generators of $R(X,L^*)$ as well, 
there is some polynomial  $p:\mc^M\ra\mc^N$ such that $f=p\circ f^{\prime}$.
This gives the above commutative diagram as we want.

Secondly, we prove that $f:L\ra Z$ is the Grauert blow-down of $L$.
Let $k\geq 1$ such that $kL^*$ is very ample and $(k+1)L^*$ is globally generated.
We can choose generators $\{f_1,\cdots,f_N\}$ of $R(X,L^*)$ satisfying
\begin{itemize}
\item[$\bullet$] 
$f_1,\cdots,f_{N_1}$ form a basis of $H^0(X,kL^*)$,
\item[$\bullet$]
$f_{N_1+1},\cdots,f_{N_2}$ form a basis of $H^0(X,(k+1)L^*)$.
\end{itemize}
It suffices to show that $f$ satisfies the following properties:
\begin{itemize}
\item[(1)]
$f(X)$ is a single point, $f:L\ra\mc^N$ is proper (by Remmert's proper mapping theorem, $Z:=f(L)$ is then an analytic subset of $\mc^N$),
\item[(2)]
$f:L\backslash X\ra\mc^{N}$ is a locally biholomorphism onto its image,
\item[(3)]
$f:L\backslash X\ra \mc^{N}$ is injective,
\item[(4)]
if $U\subseteq Z$ is open, $V=f^{-1}(U)$, then for every function $h$ holomorphic in $V$, there is a holomorphic function $g$ holomorphic in $U$ so that $h=g\circ f$.
\end{itemize}

\emph{Proof of $(1)$.}
We see  $f(X)=\{0\}$ since each $f_j$ vanishes identically on $X$.
To prove the properness of $f:L\ra\mc^N$, we take $\{U_i\}_{i=1}^M$ to be an open covering of $X$, 
by Lemma \ref{lem:expansion},
 $f_j(x,t_i)=f_{j,k}(x)t_i^k$ on $L|_{U_i}=U_i\times \mc$, $j=1,\cdots,N_1$. 
Thanks to $kL^*$ is globally generated, 
$f_{j,k}$ do not vanish simultaneuously,
the properness of $f$ follows.

\emph{Proof of $(2)$.}
We show that $\hat{f}=(f_1,\cdots,f_{N_1}):L \ra\mc^{N_1}$ does the job.
That is, $\hat{f}:L \backslash X\ra\mc^{N_1}$ is a locally biholomorphism onto its image.
Choose a trivialization $L|_U=U_x\times\mc_t$, 
$f_j(x,t)=f_{j,k}(x)t^k$ for some $f_{j,k}\in\mathcal{O}(U)$.
By shrinking $U$, 
we may assume $f_{1,k}$ does not vanish on $U$, 
then we discover
\begin{align*}
\tilde{f}:U\times \mc^*\ra\mc^{N_1},(x,t)\mapsto \Big(f_{1,k}(x)t^k,\frac{f_{2,k}(x)}{f_{1,k}(x)},\cdots,\frac{f_{N_1,k}(x)}{f_{1,k}(x)}\Big)
\end{align*}
is locally biholomorphic.
This is because $kL^*$ is very ample, 
the mapping $X\ra \mathbb{P}^{N_1-1},x\mapsto[f_1(x):\cdots:f_{N_1}(x)]$ is an embedding and for $t_0\neq0$, 
$\frac{\partial}{\partial t}(f_{1,k}(x)t^{k})|_{t=t_0}=k f_{1,k}(x)t_0^{k-1}\neq0$.
As a consequence, we can infer
\begin{align*}
\hat{f}:U\times \mc^*\ra\mc^{N_1},(x,t)\mapsto \Big(f_{1,k}(x)t^k, f_{2,k}(x)t^k,\cdots, f_{N_1,k}(x)t^k\Big)
\end{align*}
is also locally biholomorphic.

\emph{Proof of $(3)$}.
$kL^*$ is very ample, its global sections seperate points of $X$, 
therefore $\hat{f}$ maps distinct fibers of $L$ onto distinct complex lines passing through the origin.
For every $x\in X$, 
if we identify $\hat{f}(L_x)$ with $\mc$, the restriction $\hat{f}|_{L_x}$ has the form $\mc\ra\mc,t\mapsto t^k$.
Furthermore, 
since $(k+1)L^*$ is globally generated, we see that
\begin{align*}
(f_{N_1+1},\cdots,f_{N_2}):L_x\ra\mc^{N_2-N_1},t\mapsto(c_{N_1+1}t^k,\cdots,c_{N_2}t^k)
\end{align*}
with some of $c_i$ does not vanish.
As a consequence, $f=(f_1,\cdots,f_{N_2})$ is injective on $L\backslash X$, this proves $(3)$.

\emph{Proof of $(4)$.}
From $(1),(2),(3)$, we have $f:L \backslash X\ra Z\backslash\{0\}$ is biholomorphic.
To prove $(4)$, we only need to prove every holomorphic function $h$ in a neighborhood of the zero section $X$, 
there is a holormophic function $g\in\mathcal{O}_{Z,0}$ so that $h=g\circ f$.
By Lemma \ref{lem:expansion}, we expand $h$ into series:
\begin{align*}
h=\sum_{k=0}^{\infty}h_k, \ h_k\in H^0(X,kL^*),
\end{align*}
Since $f_1,\cdots,f_N$ generates $R(X,L^*)$, for every $k$, there is a polynomial $p_k\in\mc[x_1,\cdots,x_N]$ so that $h_0+\cdots+h_k=p_k(f_1,\cdots,f_N)$,
that is $h_0+\cdots+h_k=p(z_1,\cdots,z_N)\circ f$.
Since $h_0+\cdots+h_k$ converges locally uniformly to $h$ in a neighborhood of $X$ in $L$, 
we can infer $p_k(z_1,\cdots,z_N)$ converges locally uniformly in a neighborhood of $0$ in $Z$,
then Weierstrass uniformly convergence theorem for complex spaces (see e.g. \cite{Gun90}) implies $p_k$ converges to a function $g\in\mathcal{O}_{Z,0}$. 
Clearly $h=g\circ f$, we complete the proof.
\end{proof}

\emph{Remark.}
Let $\pi:(L,X)\ra (Z,z_0)$ be the Grauert blow-down of $L$, then any $g\in R(X, L^*)$ induces a holomorphic function on $Z$.
From the above lemma, we can see that a set of generators $\{f_1,\cdots, f_N\}$ of $R(X, L^*)$ always induces a set of generators 
of $\mathfrak{m}_{z_0}$ as an ideal of $\mathcal O_{z_0}$.

\begin{lem}\label{lem:invertble}
Let $L$ be a negative line bundle on a compact complex space $X$ and $f:(L,X)\ra (Z,z_0)$ be its Grauert blow-down.
Let $\mathfrak{m}_{z_0}$ be the maximal ideal of $\mathcal O_{Z,z_0}$,
then the ideal sheaf $f^{-1}(\mathfrak{m}_{z_0})\mathcal{O}_L$ is invertible if and only if $k_0L^*$ is globally generated, 
where $k_0$ is the initial order of $L^*$.
\end{lem}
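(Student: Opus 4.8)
The plan is to pin down $\mathcal I:=f^{-1}(\mathfrak m_{z_0})\mathcal O_L$ very explicitly and then compare it with the ideal sheaf $\mathcal I_X$ of the zero section. By the defining property of the Grauert blow-down, $f^*$ identifies $\mathcal O_{Z,z_0}$ with the ring of germs along $X$ of holomorphic functions on $L$, and by Lemma~\ref{lem:expansion} such a germ is a convergent series $\sum_{k\ge 0}h_k$ with $h_k\in H^0(X,kL^*)$, which vanishes at $z_0$ precisely when $h_0=0$. Since $H^0(X,kL^*)=0$ for $1\le k<k_0$, this gives $f^*\mathfrak m_{z_0}=\{\sum_{k\ge k_0}h_k\}$. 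From this I would deduce, using that the stalks $\mathcal O_{L,x}$ are Noetherian and ideals are $\mathfrak m$-adically closed (so the convergent series already lies in the finitely generated ideal spanned by its homogeneous pieces), that $\mathcal I$ is the ideal sheaf on $L$ generated by the functions $h_k$, $k\ge k_0$, $h_k\in H^0(X,kL^*)$; in particular $\mathcal I|_{L\setminus X}=\mathcal O_L$. Since each $h_k$ vanishes to order $k\ge k_0$ along $X$ and $\mathcal I_X$ is invertible (locally $\{t=0\}$, with $t$ a non-zero-divisor in $\mathcal O_{L,x}$), one gets $\mathcal I\subseteq\mathcal I_X^{k_0}$.

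For the ``if'' direction I would argue as follows. If $k_0L^*$ is globally generated, then near any $x\in X$ I can pick a trivialization $L|_W\cong W\times\mathbb C_t$ and a section $s\in H^0(X,k_0L^*)$ with $s(x)\ne 0$; as a function on $L$, $s$ equals $a_s(\cdot)\,t^{k_0}$ with $a_s$ a unit near $x$, so $t^{k_0}\in\mathcal I_x$. Hence $\mathcal I_x\supseteq(t^{k_0})=\mathcal I_{X,x}^{k_0}$, and with the reverse inclusion from the previous paragraph, $\mathcal I=\mathcal I_X^{k_0}$ is invertible.

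For the ``only if'' direction, suppose $\mathcal I$ is invertible. Then $\mathcal K:=\mathcal I\otimes\mathcal O_L(k_0X)\subseteq\mathcal O_L$ is an invertible ideal sheaf with $\mathcal K|_{L\setminus X}=\mathcal O_L$, and locally near $x\in X$ I may write $\mathcal K_x=(g)$ with $g$ a non-zero-divisor and $\mathcal I_x=(t^{k_0}g)$. The crux will be the following claim: \emph{if some $s\in H^0(X,k_0L^*)$ has non-zero germ at $x$, then $g$ is a unit at $x$ (so $\mathcal I_x=(t^{k_0})$) and $k_0L^*$ is globally generated at $x$.} To prove it I would use that, $L^*$ being ample, $mL^*$ is globally generated for $m\gg 0$; a section of $mL^*$ non-vanishing at $x$ is $(\text{unit})\cdot t^m$ near $x$, so $t^m\in\mathcal I_x=(t^{k_0}g)$ and thus $g\mid t^{m-k_0}$. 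Also each $h\in H^0(X,k_0L^*)$ contributes a generator $a_h\,t^{k_0}$ of $\mathcal I_x$, so $g\mid a_h$; applying this to $s$ and passing to a branch of $X$ at $x$ on which the local representative of $s$ does not vanish (one exists, $X$ being reduced), the image of $g$ in the local domain $\mathcal O_{L,x}/\mathfrak q$ divides both $t^{m-k_0}$ and a nonzero element lying outside $(t)$. A short descent — cancel $t$ repeatedly, using that $\mathcal O_{L,x}/\mathfrak q$ and its quotient by $(t)$ are domains — then forces this image, hence $g$ itself, to be a unit; and once $\mathcal I_x=(t^{k_0})$, reducing a relation $t^{k_0}=\sum_j c_j a_{h_j}t^{k_j}$ modulo $t^{k_0+1}$ exhibits $1$ as an $\mathcal O_{X,x}$-combination of local representatives of sections of $k_0L^*$, i.e.\ $k_0L^*$ is globally generated at $x$.

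Granting the claim, I would finish by a component chase. Let $B$ be the base locus of $k_0L^*$. By the contrapositive of the claim, at every point of $B$ all sections of $k_0L^*$ vanish on every irreducible component through that point, so $B$ is a union of irreducible components of $X$. If $B\ne\emptyset$ then $B\ne X$ (else $H^0(X,k_0L^*)=0$, against the minimality of $k_0$), so by connectedness some component $X_a\subseteq B$ meets a component $X_b\not\subseteq B$ at a point $p$; a section not vanishing on the irreducible $X_b$ has non-zero germ at $p$, so the claim gives $p\notin B$, contradicting $p\in X_a\subseteq B$. Hence $B=\emptyset$. I expect the main obstacle to be exactly the possible reducibility and non-normality of $X$ (hence of the total space of $L$): on a manifold the proof is just the two short trivialization arguments, but in general the claim needs the branch-by-branch analysis in $\mathcal O_{L,x}/\mathfrak q$, together with the component chase of the last step; a secondary subtlety is the identification of $\mathcal I$ with the ideal generated by the homogeneous pieces $h_k$, which rests on Noetherianity of $\mathcal O_{L,x}$ and $\mathfrak m$-adic closedness of ideals.
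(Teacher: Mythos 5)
Your proof is correct, and while the overall skeleton matches the paper's (identify $f^{-1}(\mathfrak m_{z_0})\mathcal O_L$ with the ideal generated by sections of $kL^*$, $k\ge k_0$, via the $S^1$-expansion; check invertibility in a local trivialization by comparing orders of vanishing in the fibre variable $t$; handle reducibility of $X$ by a connectedness argument over irreducible components), your execution of the ``only if'' direction is genuinely different. The paper fixes a finite set of homogeneous generators $f_1,\dots,f_N$ of $R(X,L^*)$, uses the standard fact that an invertible ideal generated by the $f_i$ is generated by a single $f_{i_0}$, rules out $\deg f_{i_0}>k_0$ by holomorphy of the quotients $f_i/f_{i_0}$ on a branch, and then uses that the zero set of the ideal is exactly $X$ to conclude $f_{i_0}$ is a nonvanishing degree-$k_0$ section. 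You instead avoid choosing generators: you factor $\mathcal I=\mathcal I_X^{k_0}\cdot\mathcal K$ (using $\mathcal I\subseteq\mathcal I_X^{k_0}$ and invertibility of $\mathcal I_X$), show the local generator $g$ of $\mathcal K$ is a unit by a divisibility descent on a branch, feeding in $t^m\in\mathcal I_x$ from ampleness (this plays the role of the paper's zero-set argument) together with $g\mid a_s$, and then recover a nonvanishing degree-$k_0$ section by comparing $t^{k_0}$-coefficients in a relation $t^{k_0}=\sum_j c_ja_{h_j}t^{k_j}$ --- an extra step the paper does not need since its local generator is itself a section of $k_0L^*$. Both routes work; the paper's is shorter once the generator lemma is in place, while yours is more intrinsic (no choice of generators, and it isolates the clean statement $\mathcal I=\mathcal I_X^{k_0}$ in the ``if'' direction). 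Two small points worth making explicit if you write this up: the identification of $\mathcal I$ with the ideal generated by the homogeneous pieces needs not just $\mathfrak m$-adic closedness of ideals but also the remark that the tail $\sum_{k\ge m}h_k$ lies in $\mathcal I_X^m\subseteq\mathfrak m_x^m$ (locally uniform convergence alone is not $\mathfrak m$-adic convergence), which is immediate since each $h_k$ carries a factor $t^k$; and in the descent you should note that the branches of $L$ at $x$ are of the form $V\times\mathbb C$ with $V$ a branch of $X$ at $x$, so that both $\mathcal O_{L,x}/\mathfrak q$ and its quotient by $(\bar t)\cong\mathcal O_{V,x}$ are indeed domains.
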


Recall that a coherent ideal sheaf over a complex space is called invertible  if it is locally generated by a single element which is not zero-divisor.

\begin{proof}
For simplicity, we denote $f^{-1}(\mathfrak{m}_{z_0})\mathcal{O}_L$ just by $\mathfrak{m}_{z_0}\mathcal{O}_L$.
We can choose generators $\{f_1,\cdots,f_N\}$ of $R(X,L^*)$ so that
\begin{itemize}
\item[$\bullet$]
$\{f_1,\cdots,f_{N_0}\}$ form a basis of $H^0(X,k_0L^*)$,
\item[$\bullet$]
$f_i\in \bigoplus_{k=k_0+1}^{\infty}H^0(X,kL^*)$ for $i=N_0+1,\cdots,N$.
\end{itemize}
By Lemma \ref{lem:generators give blow-down}, $f=(f_1,\cdots,f_N):L\ra \mc^N$ gives the Grauert blow-down of $L$.

Suppose $k_0L^*$ is globally generated, 
we need to prove $\mathfrak{m}_{z_0}\mathcal{O}_L=f_1\mathcal{O}_L+\cdots+f_N\mathcal{O}_L$ is invertible.
Take some $x_0\in X$, we may assume the section $f_1$ does not vanish at $x_0$.
Then in a neighborhood $U$ of $x_0$ so that $L|_{U}=U_x\times\mc_t$,
we have
\begin{align*}
&f_1(x,t)=f_{1,k_0}(x)t^{k_0},\\
&\cdots\\
&f_{N_0}(x,t)=f_{N_0,k_0}(x)t^{k_0},\\
&f_{N_0+1}(x,t)=f_{N_0+1,k_0+1}(x)t^{k_0+1}+\cdots,\\
&\cdots
\end{align*}
Since $f_{1,k_0}$ does not vanish at $x_0$, we get $\frac{f_i}{f_1}$, $i\geq2$ are holomorphic in a neighborhood of $x_0$ in $L$.
Hence $\mathfrak{m}_{z_0}\mathcal{O}_L$ is invertible as desired.

Conversely, if $\mathfrak{m}_{z_0}\mathcal{O}_L=f_1\mathcal{O}_L+\cdots+f_N\mathcal{O}_L$ is invertible, we need to show $k_0L^*$ is globally generated.
Let $X=X_1\cup\cdots\cup X_s$ be the irreducible decomposition of $X$,
we may assume the sections $f_1,\cdots,f_{N_0}$ do not simultaneously vanish on $X_1$ identically. 
Since $X$ is connected, 
we can also assume that $X_{i+1}\cap(X_1\cup\cdots\cup X_i)$ is not empty for $i=1,\cdots, s-1$.

For every $x_1\in X_1$, the ideal $(f_1,\cdots,f_N)_{x_1}$ is generated by a non-zero divisor, 
thus we can infer $(f_1,\cdots,f_N)_{x_1}$ is generated by some $f_{i_0}$.
We claim that $i_0\leq N_0$.
Otherwise for some $i\leq N_0$, the section $f_{i}$ do not vanish identically on the germ $(X_1,x_1)$, 
which leads to $\frac{f_{i}}{f_{i_0}}$ could not be holomorphic at $x_1$, a contradiction.
Since the zero set of $\mathfrak{m}_{z_0}\mathcal{O}_L$ is $X$,
we get to know $\{f_{i_0}=0\}\cap V=X\cap V$ for some neighborhood $V$ of $x_1$ in $L$, 
which implies the section $f_{i_0}$ does not vanish at $x_1$.
Thus $X_1$ does not meet the base locus of $k_0L^*$.
From this and $X_2\cap X_1$ is non-empty, 
we infer that the sections $f_1,\cdots,f_{N_0}$ do not simultaneously vanish on the whole $X_2$,
then we use the same arguments as above to prove $X_2$ does not meet the base locus of $k_0L^*$.
By induction, 
we conclude that $k_0L^*$ is globally generated.

\end{proof}

\subsection{Proof of the main theorem}
With the preparations discussed in the previous subsections, we can now give the proof of Theorem \ref{thm:main}.


\begin{thm}(=Theorem \ref{thm:main})\label{thm:blow-up vs blow-down}
Let $L$ be a negative line bundle over a compact complex space and 
$f:(L,X)\ra(Z,z_0)$ be the Grauert blow-down.
Then $f:L\ra Z$ is the quadratic transform of $Z$ with center $z_0$ if and only if $k_0L^*$ is very ample and $(k_0+1)L^*$ is globally generated, where $k_0$ is the initial order of $L^*$.
\end{thm}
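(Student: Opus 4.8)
The plan is to prove the two directions separately, using Lemma \ref{lem:generators give blow-down} and Lemma \ref{lem:invertble} as the main engines, so that the remaining work is mostly bookkeeping about what the quadratic transform is.

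\medskip

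\emph{Sufficiency.} Suppose $k_0L^*$ is very ample and $(k_0+1)L^*$ is globally generated. First I would record that under these hypotheses $k_0$ is the smallest integer for which $k_0L^*$ has a nonzero section, and $(k_0+1)L^*$ also has nonzero sections (since globally generated and $L^*$ ample force $(k_0+1)L^*$ to be nontrivial on each component); in particular by Lemma \ref{lem:invertble} the ideal sheaf $\mathfrak m_{z_0}\mathcal O_L$ is invertible. Now choose, as in the proof of Lemma \ref{lem:invertble} (or Lemma \ref{lem:generators give blow-down}), a generating set $\{f_1,\dots,f_N\}$ of $R(X,L^*)$ with $f_1,\dots,f_{N_0}$ a basis of $H^0(X,k_0L^*)$. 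By the Remark after Lemma \ref{lem:generators give blow-down}, the images of $f_1,\dots,f_N$ in $\mathcal O(Z)$ generate $\mathfrak m_{z_0}$, so the quadratic transform $Bl_{z_0}(Z)$ is the closure in $Z\times\mathbb P^{N-1}$ of the graph of $z\mapsto[f_1(z):\cdots:f_N(z)]$. On the other hand $f\colon L\to Z$ is the Grauert blow-down. The key point is that on $L\setminus X$ the invertibility of $\mathfrak m_{z_0}\mathcal O_L$ together with the concrete local form $f_i(x,t)=f_{i,k_0}(x)t^{k_0}+\cdots$ (for $i\le N_0$, with some $f_{i,k_0}$ nonvanishing by global generation of $k_0L^*$) shows that the map
\begin{align*}
L\setminus X\ \longrightarrow\ Z\times\mathbb P^{N-1},\qquad \ell\mapsto\bigl(f(\ell),[f_1(\ell):\cdots:f_N(\ell)]\bigr)
\end{align*}
extends holomorphically across $X$: near a point of $X$ where $f_{i_0,k_0}\ne0$ one divides by $f_{i_0}$, and the quotients $f_j/f_{i_0}$ are holomorphic precisely because $\mathfrak m_{z_0}\mathcal O_L$ is generated by $f_{i_0}$ there. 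This extension is a morphism $L\to Z\times\mathbb P^{N-1}$ over $Z$ whose image is closed (properness of $f$, Remmert) and agrees with $Bl_{z_0}(Z)$ over $Z\setminus\{z_0\}$; since $f$ restricted to $X$ is surjective onto the exceptional fibre of $Bl_{z_0}(Z)$ and both sides are reduced, the image is exactly $Bl_{z_0}(Z)$. Finally I would check the map $L\to Bl_{z_0}(Z)$ is biholomorphic: it is biholomorphic over $Z\setminus\{z_0\}$ by Lemma \ref{lem:generators give blow-down}, proper, and a bijection — on $X$ it sends the fibre $L_x$ via $t\mapsto[f_{1,k_0}(x):\cdots]$ which separates points of $X$ by very ampleness and is the right exceptional divisor by the $t^{k_0}$ computation — and then it is an isomorphism of complex spaces because a proper bijective holomorphic map between reduced complex spaces that is biholomorphic off a nowhere dense analytic set and unramified (here using again that $f$ is locally biholomorphic off $X$ and that $Bl_{z_0}(Z)$ is normal/reduced along the exceptional set when, more carefully, one invokes that both are the blow-up of the same invertible ideal) is an isomorphism. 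The cleanest way to close this is: $f\colon L\to Z$ inverts $\mathfrak m_{z_0}$ into an invertible sheaf, hence by the universal property of blowing up an ideal there is a unique $Z$-morphism $L\to Bl_{z_0}(Z)$; it is proper and bijective, hence an isomorphism.

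\medskip

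\emph{Necessity.} Conversely suppose $f\colon L\to Z$ is the quadratic transform with center $z_0$. Then by construction $\mathfrak m_{z_0}\mathcal O_L$ is invertible (pullback of $\mathfrak m_{z_0}$ under blowing up that ideal is the twisting sheaf, hence invertible), so Lemma \ref{lem:invertble} immediately gives that $k_0L^*$ is globally generated. It remains to upgrade this to: $k_0L^*$ is very ample and $(k_0+1)L^*$ is globally generated. For this I would argue as follows. Because $\mathfrak m_{z_0}\mathcal O_L$ is invertible and its zero set is exactly $X$ (with multiplicity one, since $X$ is reduced in $L$), the divisor it cuts out is $k_0 X$ locally — concretely, in a trivialization where $f_{i_0,k_0}\ne0$ the generator is $f_{i_0}=u\,t^{k_0}$ with $u$ a unit — so $\mathfrak m_{z_0}\mathcal O_L$ as a line bundle on $L$ restricts on the exceptional set to $\mathcal O(-k_0 X)|_X=k_0L^*$ (identifying the conormal bundle $N^*_{X/L}$ with $L^*$). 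Now the tautological embedding $Bl_{z_0}(Z)\hookrightarrow Z\times\mathbb P^{N-1}$ restricts on the exceptional fibre $E$ to a closed embedding $E\hookrightarrow\{z_0\}\times\mathbb P^{N-1}=\mathbb P^{N-1}$ given by the linear system $|\mathfrak m_{z_0}\mathcal O_L|_X|=|k_0L^*|$ (the generators $f_1,\dots,f_N$ descend to sections of $k_0L^*$, namely the degree-$k_0$ parts, and the higher $f_i$ contribute nothing on $E$ because they vanish to order $\ge k_0+1$). Under the identification $L\cong Bl_{z_0}(Z)$ from the hypothesis, $E$ is identified with the zero section $X$, so $X\hookrightarrow\mathbb P^{N-1}$ is an embedding cut out by $H^0(X,k_0L^*)$; this is exactly the statement that $k_0L^*$ is very ample. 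Finally, for $(k_0+1)L^*$ globally generated: one looks one jet deeper. The second graded piece $\mathfrak m_{z_0}^{(k_0+1)/k_0}\cdot(\cdots)$ — more honestly, I would use that $f\colon L\to Z$ being an isomorphism onto $Bl_{z_0}(Z)$ forces the map to be an \emph{immersion} along $X$, i.e. injective on tangent vectors not tangent to $X$ as well as separating points of $X$; combined with Lemma \ref{lem:generators give blow-down}'s analysis this means the sections of $k_0L^*$ and $(k_0+1)L^*$ together must separate the fibres $L_x$ infinitesimally, and separation along each fibre $L_x\cong\mathbb C$ at order one beyond $t^{k_0}$ is precisely the condition that $(k_0+1)L^*$ be base-point free. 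I would make this last point precise by the direct computation: if $(k_0+1)L^*$ had a base point $x_0$, then every $f_i$ with $i>N_0$ vanishes at $x_0$ to order $\ge k_0+2$ along the fibre while the $f_i$ with $i\le N_0$ contribute only the $t^{k_0}$ term, so the map $f$ restricted to a neighbourhood of the point of $X$ over $x_0$ would factor through $t\mapsto t^{k_0}$ in a way incompatible with $\mathfrak m_{z_0}\mathcal O_L$ being invertible unless $k_0=1$ — and when $k_0=1$ there is nothing to prove since a base point of $2L^*$ with $L^*$ very ample is excluded by very ampleness of $L^*$ giving $2L^*$ base-point free. (This is the place where I expect to have to be careful, since a fully clean proof may instead deduce global generation of $(k_0+1)L^*$ directly from $k_0L^*$ very ample plus the remark at the end of the introduction — but as stated there the authors leave open whether it is automatic, so the argument must genuinely use that $f$ is the quadratic transform and not merely that $\mathfrak m_{z_0}\mathcal O_L$ is invertible.)

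\medskip

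\emph{Main obstacle.} The routine parts are the local divisor computations and invoking Lemma \ref{lem:invertble} and Lemma \ref{lem:generators give blow-down}. The genuinely delicate step is the necessity of global generation of $(k_0+1)L^*$: one has to extract from the bare statement ``$f$ equals the quadratic transform'' — equivalently, from the universal property of blowing up $\mathfrak m_{z_0}$ together with $f\colon L\to Z$ being an \emph{isomorphism} onto it, not just a map inverting the ideal — the infinitesimal information at order $k_0+1$ along the fibres. Getting the multiplicities right (distinguishing the reduced structure of $X\subset L$, the $k_0$-th order vanishing of $\mathfrak m_{z_0}\mathcal O_L$, and the $(k_0+1)$-st order behaviour) is where the proof needs the most care.
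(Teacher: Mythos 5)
There is a genuine gap in your ``if'' direction. After producing the morphism $h=(f,\Phi):L\to Bl_{z_0}(Z)$ (which is fine, via holomorphy of the quotients $f_j/f_{i_0}$ or via the universal property of blowing up), you close by asserting that a proper bijective holomorphic map, biholomorphic off a nowhere dense analytic set, is an isomorphism. That principle is false for reduced complex spaces: the normalization $t\mapsto(t^2,t^3)$ of a cuspidal curve has all these properties and is not an isomorphism. Your hedges do not repair it: ``unramified along $X$'' is precisely what must be proved and is exactly where the hypothesis on $(k_0+1)L^*$ enters, while ``both are the blow-up of the same invertible ideal'' is circular, and normality of $Bl_{z_0}(Z)$ along the exceptional set is unavailable. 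Note the warning sign: apart from the irrelevant remark that $(k_0+1)L^*$ has sections, your sufficiency argument never uses global generation of $(k_0+1)L^*$, so if it worked it would prove that very ampleness of $k_0L^*$ alone suffices; combined with the necessity direction this would settle the open question in the paper's Remark. The failure mode is concrete: if $x_0$ were a base point of $(k_0+1)L^*$ and $k_0\ge 2$, then on the fiber $L_{x_0}$ the map $F$ looks like $t\mapsto(c_1t^{k_0},\dots,c_{N_0}t^{k_0},0,\dots,O(t^{k_0+2}),\dots)$ in the affine chart, a cusp-like bijection that is not an immersion at $t=0$. The paper's proof supplies exactly the missing step: after a minimal local embedding $U\hookrightarrow D\subset\mathbb{C}^d$, it computes the Jacobian of $F=(f_1,\dots,f_N,f_2/f_1,\dots,f_N/f_1)$ at $(x_0,0)$; very ampleness of $k_0L^*$ gives rank $d$ from the $z$-derivatives of $f_2/f_1,\dots,f_{N_0}/f_1$, and global generation of $(k_0+1)L^*$ gives a nonvanishing $t$-derivative of some $f_i/f_1$ with $N_0<i\le N_1$, yielding rank $d+1$, i.e.\ $F$ is an embedding along the zero section and hence a closed embedding onto the quadratic transform.

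Your ``only if'' direction is essentially the paper's argument (invertibility of $\mathfrak{m}_{z_0}\mathcal{O}_L$ plus Lemma \ref{lem:invertble} gives global generation of $k_0L^*$; the exceptional fiber embedding, given by the degree-$k_0$ parts of the generators, gives very ampleness), but your ``precise'' justification of global generation of $(k_0+1)L^*$ misattributes the contradiction: a base point of $(k_0+1)L^*$ is not ``incompatible with $\mathfrak{m}_{z_0}\mathcal{O}_L$ being invertible'' --- invertibility holds whenever $k_0L^*$ is globally generated, independently of $(k_0+1)L^*$. The contradiction is with $F$ being an embedding: for $k_0\ge 2$ and $x_0$ a base point, all $t$-derivatives at $(x_0,0)$ of the $f_i$ and of the ratios $f_i/f_1$ vanish, so $F|_{L_{x_0}}$ is not an immersion, contradicting $F$ being an isomorphism onto $Bl_{z_0}(Z)$; this is the computation the paper carries out, and your earlier ``immersion along $X$'' sentence is the correct idea that should replace the invertibility claim.
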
 
\begin{proof}
Assume  $\{f_1,\cdots,f_N\}$ be a set of generators of $R(X,L^*)$ satisfying the following conditions:
\begin{itemize}
\item[(1)]
$\{f_1,\cdots,f_{N_0}\}$ form a basis of $H^0(X,k_0L^*)$,
\item[(2)]
$\{f_{N_0+1},\cdots,f_{N_1}\}$ form a basis of $H^0(X,(k_0+1)L^*)$,
\item[(3)]
$f_i \in \bigoplus_{k=k_0+2}^{\infty}H^0(X,kL^*)$ for $i=N_1+1,\cdots,N$,
\end{itemize}
Lemma \ref{lem:generators give blow-down} yields that $f=(f_1,\cdots,f_N):L\ra\mc^N$ gives the Grauert blow-down of $L$. 
For every $x_0\in X$, 
choose a neighborhood $U$ of $x_0$ so that $L|_{U}=U_x\times\mc_t$.

By Lemma \ref{lem:expansion}, we can span $f_i$ near $(x_0,0)$ for $i=1,\cdots,N$ as follows:
\begin{align*}
&f_1(x,t)=f_{1,k_0}(x)t^{k_0},\\
&\cdots\\
&f_{N_0}(x,t)=f_{N_0,k_0}(x)t^{k_0},\\
&f_{N_0+1}(x,t)=f_{N_0+1,k_0+1}(x)t^{k_0+1},\\
&\cdots\\
&f_{N_1}(x,t)=f_{N_1,k_0+1}(x)t^{k_0+1},\\
&f_{N_1+1}(x,t)=f_{N_1+1, k_0+2}(x)t^{k_0+2}+f_{N_1+1, k_0+3}(x)t^{k_0+3}+\cdots\\
&\cdots.
\end{align*}

\emph{Proof of ``only if".}
If $f:L\ra Z$ is the quadratic transform of $Z$ with the center $z_0$, then  $\mathfrak{m}_{z_0}\mathcal{O}_L$ is invertible.
It follows from Lemma \ref{lem:invertble} that $k_0L$ is globally generated.
From the above local expression of $f_i$,
we see that the mapping 
$$\Phi:L\ra\mathbb{P}^{N-1},\ p\mapsto[f_1(p):\cdots:f_N(p)]$$ 
is well-defined, thus we obtain the following commutative diagram
\[\begin{tikzcd}
	L &&&& {Z\times\mathbb{P}^{N-1}} \\
	\\
	&& Z
	\arrow["{F:=(f,\Phi)}", from=1-1, to=1-5]
	\arrow["f"', from=1-1, to=3-3]
	\arrow["\mathrm{pr}", from=1-5, to=3-3]
\end{tikzcd}\]
Since $f$ is the blow-up of $Z$ with center $\mathfrak{m}_{z_0}$, 
$F$ must give an isomorphism between $L$ and the closure of
\begin{align*}
\{(z,[w])\in ( Z\backslash\{z_0\}) \times\mathbb{P}^{N-1}: z_iw_j=z_jw_i,1\leq i,j\leq N \}
\end{align*}
in $Z\times\mathbb P^{N-1}$.
In particular,
\begin{align*}
F\big|_{X}:X\ra \{z_0\}\times \mathbb{P}^{N-1}, 
x\mapsto \big(z_0,[f_1(x):\cdots:f_{N_0}(x):0:\cdots:0]\big)
\end{align*}
is an embedding, 
hence $k_0L^*$ is very ample. 

We prove $(k_0+1)L^*$ is globally generated.
If $k_0=1$ there is nothing to do, 
so we assume $k_0\geq2$.
Suppose to the contrary that some $x_0\in X$ is a base point of $(k_0+1)L^*$. 
Take a neighborhood $U$ of $x_0$ and span $f_i$ in $L|_U\cong U\times\mc$ as before,
we may assume $f_{1,k_0}$ does not vanish at $x_0$, 
then $F$ maps $\mathcal{U}$, a neighborhood of $(x_0,0)$, into $Z\times\{[w_1:\cdots:w_N]\in\mathbb{P}^{N-1}: w_1\neq0\}$:
\begin{align*}
F: \mathcal{U}&\ra Z\times\mathbb{P}^{N-1}\\
p&\mapsto\Big(f_1(p),\cdots,f_N(p),[1:\frac{f_2}{f_1}:\cdots:\frac{f_{N_0}}{f_1}:\frac{f_{N_0+1}}{f_1}:\cdots:\frac{f_{N_1}}{f_1}:\cdots](p)\Big)
\end{align*} 
Since $k_0\geq2$, $\frac{\partial f_i}{\partial t}\big|_{(x_0,0)}=0$ for every $i=1,\cdots,N$.
Note for $i=2,\cdots,N_0$, $\frac{f_i}{f_1}$ are functions of $x$-variable only, 
hence $\frac{\partial}{\partial t}(\frac{f_i}{f_1})\big|_{(x_0,0)}=0$.
For $i=N_0+1,\cdots,N_1$,
we have $\frac{f_{i}}{f_1}=\frac{f_{i,k_0+1}}{f_{1,k_0}}\cdot t$,
the assumption that $x_0$ is a base point of $(k_0+1)L^*$ implies $\frac{f_{i,k_0+1}}{f_{1,k_0}}(x_0)=0$, 
hence $\frac{\partial  }{\partial t}(\frac{f_i}{f_1})\big|_{(x_0,0)}=0$.
Moreover, for $i=N_1+1,\cdots,N$, 
we have $\frac{f_i}{f_1}=O(t^2)$, 
hence $\frac{\partial}{\partial t}(\frac{f_i}{f_1})\big|_{(x_0,0)}=0$.
As a consequence, the restriction of $F$ on the fiber $L_{x_0}$ is not an embedding, a contradiction.
Therefore $(k_0+1)L^*$ is globally generated and we complete the proof of ``only if" part.

\emph{Proof of ``if".} 
Since $k_0L^*$ is globally generated, 
$$\Phi:L\ra\mathbb{P}^{N-1},p\mapsto[f_1(p):\cdots:f_N(p)]$$ 
is a well-defined holomorphic map,
we obtain the following commutative diagram
\[\begin{tikzcd}
	L &&&& {Z\times\mathbb{P}^{N-1}} \\
	\\
	&& Z
	\arrow["{F:=(f,\Phi)}", from=1-1, to=1-5]
	\arrow["f"', from=1-1, to=3-3]
	\arrow["\mathrm{pr}", from=1-5, to=3-3]
\end{tikzcd}\]
We only need to prove $F$ is a closed embedding.
The assumption that $k_0L^*$ is very ample makes $F|_X:X\ra \{z_0\}\times\mathbb{P}^{N-1}$ is a closed embedding, 
also note that $f:L\backslash X\ra Z\backslash\{z_0\}$ is biholomorphic,
it is sufficient to prove $F$ is an embedding at every point $x_0\in X$.

We take a neighborhood $U$ of $x_0$ so that $L|_{U}=U_x\times\mc_t$
and span $f_i$, $i=1,\cdots,N$ as before.
Since our complex space $X$ is not assumed to be regular, we need to
take a minimal embedding $\tau:U\hookrightarrow D$, 
where $D\subseteq\mc^d_z$ is the unit ball and $d=\dim_{\mc} \mathfrak{m}_{x_0}/\mathfrak{m}_{x_0}^2$.
Then $f_i$ extend to holomorphic functions on $D\times\mc$ which we shall still denote by $f_i$.
Assuming $f_{1,k_0}(x_0)\neq0$,
then $(\frac{f_2}{f_1},\cdots\frac{f_{N_0}}{f_1}):D\ra\mc^{N_0-1}$ is an embedding at $x_0$ (see e.g. \cite[Proposition 2.4]{Fis76}).
As a result, the matrix
$$
\begin{pmatrix}
\frac{\partial }{\partial z_1}(\frac{f_2}{f_1})& \cdots & \frac{\partial }{\partial z_1}(\frac{f_{N_0}}{f_1})\\
\vdots& \vdots& \vdots\\ 
\frac{\partial  }{\partial z_d}(\frac{f_2}{f_1})&  \cdots & \frac{\partial }{\partial z_d} (\frac{f_{N_0}}{f_1})
\end{pmatrix}
$$ 
has rank $d$ at $x_0$.

The Jacobian of the map
 $$F=(f_1,\cdots,f_N,\frac{f_2}{f_1},\cdots,\frac{f_N}{f_1}):D\times\mc\ra\mc^{2N-1}$$ at $(x_0,0)$ is equal to the following matrix
$$
\begin{pmatrix}
O & \frac{\partial }{\partial t}(\frac{f_2}{f_1}) & \cdots & \frac{\partial }{\partial t}(\frac{f_{N_0}}{f_1})& \frac{\partial }{\partial t}(\frac{f_{N_0+1}}{f_1}) & \cdots & \frac{\partial}{\partial t}(\frac{f_{N_1}}{f_1}) & \cdots & \frac{\partial }{\partial t}(\frac{f_N}{f_1})\\ 
	  O & \frac{\partial }{\partial z_1}(\frac{f_2}{f_1}) & \cdots & \frac{\partial }{\partial z_1} (\frac{f_{N_0}}{f_1})& \frac{\partial }{\partial z_1}(\frac{f_{N_0+1}}{f_1}) & \cdots&\frac{\partial }{\partial z_1}(\frac{f_{N_1}}{f_1}) & \cdots & \frac{\partial}{\partial z_1}(\frac{f_N}{f_1})\\ 
	\vdots&\vdots& \vdots& \vdots& \vdots& \vdots& \vdots& \vdots& \vdots\\ 
	O & \frac{\partial }{\partial z_d}(\frac{f_2}{f_1}) & \cdots & \frac{\partial }{\partial z_d}(\frac{f_{N_0}}{f_1}) & \frac{\partial }{\partial z_d}(\frac{f_{N_0+1}}{f_1}) & \cdots &\frac{\partial }{\partial z_d}(\frac{f_{N_1}}{f_1}) & \cdots & \frac{\partial }{\partial z_d}(\frac{f_N}{f_1})
\end{pmatrix}
$$
Since $(k_0+1)L^*$ is globally generated,
some of $\frac{\partial }{\partial t }(\frac{f_{N_0+1}}{f_1}),\cdots,\frac{\partial }{\partial t}(\frac{f_{N_1}}{f_1})$ does not vanish at $(x_0,0)$. 
Moreover,
$\frac{f_{N_0+1}}{f_1},\cdots,\frac{f_{N_1}}{f_1}$ have factor $t$,
hence the sub-matrix
$$
\begin{pmatrix}
\frac{\partial }{\partial z_1}(\frac{f_{N_0+1}}{f_1})& \cdots & \frac{\partial }{\partial z_1}(\frac{f_{N_1}}{f_1})\\
\vdots& \vdots& \vdots\\ 
\frac{\partial }{\partial z_d}(\frac{f_{N_0+1}}{f_1})&  \cdots & \frac{\partial }{\partial z_d}(\frac{f_{N_1}}{f_1})
\end{pmatrix}
$$
vanishes at $(x_0,0)$. 

As a consequence, the Jacobian of $F$ at $(x_0,0)$ has rank $(d+1)$, 
$F$ is an embedding at $(x_0,0)$.
The proof is complete.
\end{proof}

\section{Theta characteristic of compact Riemannian surfaces}\label{sec:theta char}
In this section, we introduce a type of line bundles over compact Riemannian surfaces that satisfy 
the condition in Theorem \ref{thm:main} but are not very ample. 

Let $X$ be a compact Riemann surface, $K$ be the canonical divisor of $X$.
A divisor $D$ is called special if $D$ and $K-D$ are effective.
We recall the Clifford theorem.
\begin{thm}(see e.g. \cite[Chap 2]{GH78})
For a special divisor $D$ of degree $d$ on a compact Riemann surface $X$, we have
\begin{align*}
h^0(X,D):=\text{dim}H^0(X,\mathcal O(D))\leq 1+\frac{d}{2},
\end{align*}
with equality holding only if $D=0,K$, or $X$ is hyperelliptic.
\end{thm}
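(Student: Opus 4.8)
The plan is to obtain the inequality from Riemann--Roch and Serre duality together with a single lemma on complete linear systems, and then to treat the equality statement by an inductive reduction to the existence of a $g^1_2$. Throughout write $r(D)=h^0(X,\mathcal O(D))-1$, let $d=\deg D$, and let $g$ be the genus. Serre duality gives $h^1(X,\mathcal O(D))=h^0(X,\mathcal O(K-D))$, so Riemann--Roch reads
\[
r(D)-r(K-D)=d+1-g.
\]
Hence, once I establish
\[
r(D)+r(K-D)\le r(K)=g-1 \qquad (\star),
\]
adding the two relations yields $2\,r(D)\le d$, i.e.\ $h^0(X,\mathcal O(D))\le 1+\tfrac d2$, which is the assertion. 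Note that $(\star)$ uses only that $D$ and $K-D$ are effective, which is exactly the hypothesis that $D$ is special.

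The core is therefore the lemma: for effective divisors $A,B$ on $X$ one has $r(A)+r(B)\le r(A+B)$; applying it to $A=D$, $B=K-D$ gives $(\star)$. I would prove it via the principle that general points impose independent conditions on a complete linear system. For any line bundle $M$ with $h^0(M)\ge 1$ and general points $x_1,\dots,x_k$ one has $h^0(M-x_1-\cdots-x_k)=\max(h^0(M)-k,0)$, since a general point avoids the finitely many zeros of a fixed nonzero section and so is never a base point while sections survive. Choosing general $p_1,\dots,p_{r(A)}$ produces a member $A_0\in|A|$ through them, and general $q_1,\dots,q_{r(B)}$ a member $B_0\in|B|$; then $A_0+B_0\in|A+B|$ passes through all $r(A)+r(B)$ points. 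As these points are general for $|A+B|$ as well, the existence of such a member forces $\max\big(r(A+B)+1-(r(A)+r(B)),\,0\big)\ge 1$, that is $r(A+B)\ge r(A)+r(B)$.

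For the equality statement, the cases $D=0$ and $D=K$ give equality directly, and among equality divisors they are exactly the ones with $d=0$ and $d=2g-2$; so assume $0<d<2g-2$ and that equality holds, and aim to produce a $g^1_2$, i.e.\ a degree-$2$ divisor with $h^0=2$. First, an equality divisor is base-point-free: if $p$ were a base point then $h^0(D-p)=h^0(D)$, while the already-proven inequality for the special divisor $D-p$ gives $h^0(D-p)\le 1+\tfrac{d-1}2<1+\tfrac d2=h^0(D)$, a contradiction. Thus $|D|$ defines a morphism $\phi_D:X\to\mathbb P^{r}$ with $r=r(D)=d/2\ge 1$. If $\phi_D$ fails to be an embedding there is an effective degree-$2$ divisor $\eta$ (two points with equal image, or a point where $\phi_D$ is not an immersion) with $h^0(D-\eta)=h^0(D)-1$; then $D-\eta$ is again special and satisfies $h^0(D-\eta)=d/2=1+\tfrac{d-2}2$, so it is an equality divisor of degree $d-2$ with $0<\deg<2g-2$. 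Descending, I reach a degree-$2$ equality divisor, which is precisely a $g^1_2$, whence $X$ is hyperelliptic.

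The main obstacle is to exclude the alternative that $\phi_D$ is an embedding. There $X$ would be a non-degenerate smooth curve of degree $2r$ in $\mathbb P^{r}$ that is special, since $h^1(D)=g-r>0$ ($D$ being special forces $g\ge r+1$). I expect to close this case with Castelnuovo's genus bound: for a non-degenerate degree-$2r$ curve in $\mathbb P^r$ the bound gives $g\le r+1$, which combined with $g\ge r+1$ forces $g=r+1$, hence $d=2r=2g-2$ and $D\cong K$, contradicting $D\ne K$. By the $D\leftrightarrow K-D$ symmetry of the equality hypothesis the same dichotomy applies to $K-D$, so no separate treatment is needed. Bringing in Castelnuovo's inequality is the one genuinely external input, and justifying this exclusion cleanly is the step I expect to demand the most care.
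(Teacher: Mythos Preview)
The paper does not supply its own proof of this statement: Clifford's theorem is simply quoted with a reference to Griffiths--Harris, and is used as an input in \S\ref{sec:theta char}. So there is no in-paper argument to compare against.

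That said, your proposal is essentially the classical proof one finds in the reference the paper cites. The inequality via Riemann--Roch plus the additivity lemma $r(A)+r(B)\le r(A+B)$ is standard and correctly carried out. For the equality case, your reduction is sound: base-point-freeness of an equality $|D|$, then the dichotomy ``$\phi_D$ not an embedding $\Rightarrow$ descend by $2$ to another equality divisor'' versus ``$\phi_D$ an embedding $\Rightarrow$ invoke Castelnuovo''. Two small points worth tightening: (i) in the descent you should say explicitly that at \emph{every} intermediate stage the embedding alternative is again excluded by the Castelnuovo argument (since $0<d'<2g-2$ persists), so the process really terminates at $d'=2$; (ii) your Castelnuovo computation for degree $2r$ in $\mathbb P^r$ gives $g\le r+1$, and combined with speciality ($h^0(K-D)\ge 1$, $\deg(K-D)=2g-2-2r\ge 0$) you get $g=r+1$, hence $\deg(K-D)=0$ and $K-D$ effective forces $D\sim K$, contradicting $d<2g-2$. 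Bringing in Castelnuovo is a legitimate route; the more self-contained alternative (also in Griffiths--Harris/ACGH) is to trace equality back through the lemma $r(D)+r(K-D)=r(K)$ and use the general position theorem to force a $g^1_2$ directly, which avoids the external input you flagged.
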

A theta characteristic of $X$ is a divisor $D$ on $X$ such that $2D=K$. 
In literatures, $D$ is called even (resp. odd) if $h^0(D)\equiv0 \mod 2$ (resp. $h^0(D)\equiv 1 \mod 2$).
We refer readers to \cite{Ha82},\cite{Mum71} for more discussions of theta characteristics.

Now we assume $X$ is not hyperelliptic and genus $g=3$.
Suppose a theta characteristic $D$ is effective, 
Clifford theorem implies $h^0(D)<2$, so $D$ must be odd. 
This means every even theta characteristic $D$ is not effective.
Moreover, $X$ is non-hyperelliptic implies $2D=K$ is very ample, $\mathrm{deg}(3D)=3g-3= 2g$ implies $3D$ is globally generated. 
Let $L=L_{-D}$ be the associated line bundle of $-D$, then $L$ is negative. 
Theorem \ref{thm:main} implies that $f:(L,X)\ra (Z,z_0)$ is the quadratic transfrom of $Z$ with center $z_0$, but $L$ is not very ample.

In general, let $\mathscr{M}_g$ be the moduli space of compact Riemann surface of genus $g$, $\mathscr{M}_g^r$ be the subspace of $\mathscr{M}_g$ consisting of compact Riemann surfaces of genus $g$ which contain a theta characteristic $D$ with $h^0(D)\geq r+1$, $h^0(D)=r+1\mod2$.
We have
\begin{thm}(\cite[Theorem 2.17]{Bi87})
If $g\geq3$, $\mathscr{M}_g^1$ has pure codimension one in $\mathscr{M}_g$.
\end{thm}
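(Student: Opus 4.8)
The plan is to pass to the moduli space of even spin curves and realize $\mathscr{M}_g^1$ as the image of a divisor there. Let $\mathscr{S}_g^+$ denote the moduli space of pairs $(X,\theta)$ with $X$ a smooth curve of genus $g$ and $\theta$ an even theta characteristic (so $2\theta\sim K_X$ and $h^0(\theta)$ is even); it is an irreducible normal variety of dimension $3g-3$ with at worst quotient singularities, and the forgetful map $p\colon\mathscr{S}_g^+\to\mathscr{M}_g$ is finite and surjective. Since the case $r=1$ of the definition forces $h^0(\theta)$ even and $\geq 2$, we have $\mathscr{M}_g^1=p(\Theta^1_g)$, where $\Theta^1_g=\{(X,\theta)\in\mathscr{S}_g^+:h^0(\theta)\geq 2\}$. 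A finite surjective map preserves dimensions and carries components onto components, so $\mathscr{M}_g^1$ is pure of codimension one in $\mathscr{M}_g$ as soon as $\Theta^1_g$ is pure of codimension one in $\mathscr{S}_g^+$; it therefore suffices to prove the latter.

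The core input is a local determinantal description of $\Theta^1_g$. Over an étale neighbourhood $B$ of any point of $\mathscr{S}_g^+$ one has a universal curve $\pi\colon\mathscr{X}\to B$ carrying a relative theta characteristic $\Theta$ with $\Theta^{\otimes 2}\cong\omega_{\mathscr{X}/B}$. Representing $R\pi_*\Theta$ by a two-term complex of locally free sheaves and using relative Serre duality — which, because $\Theta$ is self-dual up to $\omega_{\mathscr{X}/B}$, makes this complex self-dual — one obtains a presentation $[\,\mathscr{B}^\vee\xrightarrow{\ \psi\ }\mathscr{B}\,]$ with $\psi$ skew-symmetric and $\mathscr{B}$ locally free of even rank $n$, such that $h^0(\theta_b)=\dim\ker\psi_b$ for all $b\in B$. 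This is Mumford's analysis of theta characteristics in families, and it is exactly where the parity enters: a skew form has kernel of parity $n$, so $h^0$ is automatically even along $\mathscr{S}_g^+$, and the locus where it is $\geq 2$ is cut out by the single Pfaffian $\mathrm{Pf}(\psi)$. Hence $\Theta^1_g$ is, locally on $\mathscr{S}_g^+$, the zero scheme of one regular function.

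Granting this, purity of codimension follows from two elementary facts. First, $\Theta^1_g\neq\emptyset$ for $g\geq 3$: one exhibits an even theta characteristic with a pencil of sections, for instance (a suitable multiple of) the hyperelliptic $g^1_2$ on a hyperelliptic curve, or a half-canonical series on an appropriate plane or trigonal curve. Second, $\Theta^1_g\neq\mathscr{S}_g^+$: the generic even spin curve has $h^0(\theta)=0$ — already visible for $g=3$, where the $36$ even theta characteristics of a non-hyperelliptic curve (a smooth plane quartic) carry no sections, and in general by an easy specialisation — so the local Pfaffian is not identically zero, hence, $\mathscr{S}_g^+$ being irreducible, it is a non-zero-divisor. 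Since $\mathscr{S}_g^+$ is Cohen–Macaulay, Krull's principal ideal theorem together with unmixedness shows that the zero locus of such a function is pure of codimension one; gluing the local pictures, $\Theta^1_g$ is pure of codimension one in $\mathscr{S}_g^+$, and pushing forward along the finite map $p$ finishes the proof.

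The main obstacle is the second step, the skew-symmetric determinantal model for the jump of $h^0(\theta)$ in families; everything else is formal once it is granted. If one prefers to argue infinitesimally and avoid invoking it, one can instead note that at a point $(X,\theta)$ with $h^0(\theta)=2$ and $H^0(X,\theta)=\langle s_1,s_2\rangle$, the map $H^1(X,T_X)\times H^0(X,\theta)\to H^1(X,\theta)$ composed with the Serre pairing $H^1(X,\theta)\times H^0(X,\theta)\to H^1(X,K_X)=\mathbb{C}$ produces a single linear functional $v\mapsto\langle v\cdot s_1,\,s_2\rangle$ on $T_{(X,\theta)}\mathscr{S}_g^+=H^1(X,T_X)$ whose kernel contains the Zariski tangent space of $\Theta^1_g$; checking that this functional is nonzero — equivalently, that $\Theta^1_g$ really is a hypersurface through the point — again rests only on the existence of nearby even spin curves with $h^0(\theta)=0$.
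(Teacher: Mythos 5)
The paper offers no proof of this statement: it is quoted as a black box from Teixidor i Bigas \cite[Theorem 2.17]{Bi87}, so there is no internal argument to compare yours against. Judged on its own, your sketch is the standard Harris--Mumford route and it does prove the quoted assertion: pass to the even spin moduli space $\mathscr{S}_g^+$ (irreducible, finite and surjective over $\mathscr{M}_g$), identify $\mathscr{M}_g^1$ with the image of $\Theta_g^1=\{h^0(\theta)\geq 2\}$, describe $\Theta_g^1$ \'etale-locally as the zero locus of the Pfaffian of a skew-symmetric presentation of $R\pi_*\Theta$ (Mumford's parity structure; this is exactly Harris's bound $\mathrm{codim}\leq\binom{r+1}{2}$ with $r=1$), and conclude purity from Krull's principal ideal theorem plus nonemptiness and properness of the locus; finiteness of the forgetful map then transports pure codimension one down to $\mathscr{M}_g$. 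All the reduction steps are sound, and note that the cited theorem of Teixidor actually contains a finer statement (irreducibility) which you correctly do not need. Incidentally, Cohen--Macaulayness and unmixedness are superfluous: on an irreducible (\'etale-locally integral) space, the Hauptidealsatz alone gives set-theoretic purity of the Pfaffian locus.

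Two caveats. First, the whole weight rests on the skew-symmetric determinantal model for $R\pi_*\Theta$, which you grant rather than prove; since that is itself a substantial theorem of Mumford and Harris, your argument is a reduction to the literature rather than a self-contained proof --- acceptable here, given that the statement under review is itself a citation. But be aware that your proposed ``infinitesimal alternative'' is not a substitute: a linear functional whose kernel contains the tangent space of $\Theta_g^1$ can only bound that tangent space from above, i.e.\ it addresses the trivial direction (codimension $\geq 1$), whereas the essential content is the bound codimension $\leq 1$ on every component, for which the local principal-ideal (Pfaffian) description is indispensable. Second, your nonemptiness example is slightly off: a multiple of the hyperelliptic $g^1_2$ is a theta characteristic only for odd $g$ and is even only when $g\equiv 3\pmod 4$; the example that works for every $g\geq 3$ is $\theta=g^1_2+p_1+\cdots+p_{g-3}$ with distinct Weierstrass points $p_i$, which is an even theta characteristic with $h^0(\theta)=2$. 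These are repairable details and do not affect the main line.
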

For $X\in \mathscr{M}_g\backslash\mathscr{M}_g^1$, there exists theta characteristic $D$ which is not effective. 
If $X$ is not hyperelliptic, then $2D=K$ is very ample, $3D$ has degree $3g-3\geq 2g$ hence is globally generated.
The line bundles associated such theta characteristics satisfy the condition in Theorem \ref{thm:main},
but they are not very ample.

	\end{document}